\theoremstyle{plain}
\newtheorem{theorem}{Theorem}
\newtheorem*{theorem*}{Theorem}
\newtheorem{lemma}{Lemma}
\newtheorem*{lemma*}{Lemma}
\theoremstyle{definition}
\newtheorem*{definition*}{Definition}
\theoremstyle{remark}
\newtheorem{remark}{Remark}
\newtheorem*{remark*}{Remark}
\newtheorem*{statement*}{Statement}
\begin{document}
\title[Generalized Salem functions]{Systems of functional equations, the generalized shift, and modelling pathological functions }

\author{Symon Serbenyuk}

\subjclass[2010]{11K55, 11J72, 26A27, 11B34,  39B22, 39B72, 26A30, 11B34.}

\keywords{ Salem function, systems of functional equations,  complicated local structure}

\maketitle
\text{\emph{simon6@ukr.net}}\\
\text{\emph{Kharkiv National University of Internal Affairs, Ukraine }}

\begin{abstract}

The present article is devoted to one  class of generalizations of the Salem functions. To construct such functions by systems of functional equations, the generalized shift operator is used.

\end{abstract}

\section{Introduction}

``In mathematics, when a mathematical phenomenon runs counter to some intuition, then the phenomenon is sometimes called pathological. On the other hand, if a phenomenon does not run counter to intuition, it is sometimes called well-behaved. These terms are sometimes useful in mathematical research and teaching, but there is no strict mathematical definition of pathological or well-behaved." \cite{Wiki}

One can note the following classic examples of a pathology  in real analysis:
\begin{itemize}
\item the Weierstrass function, which  is a continuous everywhere  function but is differentiable nowhere. 

\item The Cantor function, which is a monotonic continuous surjective function that maps $[0,1]$ onto $[0,1]$, but has zero derivative almost everywhere.

\item The Minkowski question-mark function introduced by Minkowski for  illustration of some points about continued fractions; this function  is continuous and strictly increasing but has zero derivative almost everywhere.

\item Fractal sets. The Cantor set defined in terms of the ternary representation of numbers from~$[0,1]$ is the simplest example of  such sets.
\end{itemize}

Fractals in $\mathbb R$ and functions with complicated local structure are the main pathological mathematical objects in real analysis.

``Nowadays, the ternary Cantor set is the paradigmatic model of the fractal geometry \cite{Mandelbrot1999, Taylor2012}  and in many branches of physics (see \cite{Bunde1994}). A large class of Cantor-type sets frequently appear as invariant sets
and attractors of many dynamical systems of the real world problems, see \cite{Kennedy1995}."~(\cite{TSBR2017}). Moran sets and homogeneous Moran sets have very important  applications as fractals. For example, there are applications  in multifractal analysis and  in the study of  the structure of the spectrum of quasicrystals  (see \cite{KLS2016, LW2011, W2005} and references therein), in the power systems (\cite{Feng2005} and its reference)  and measurement of number theory (see \cite{WW2008, Wu2005}), etc. 
For modelling of fractals, one can note that investigations of properties of one analytic rerpresentation of a certain set in terms of various expansions of real numbersare useful because this set can has some different properties in different numeral systems. For example, the set presented by the $P$-representation  is~(\cite{sets}) a self-similar fractal (i.e., this is a Moran set by Moran's definition, \cite{Moran1946}) but  such set defined  in terms of the nega-$P$-representation  is  a non-self-similar set (\cite{sets2}) having the Moran structure (i.e., this is a  Moran set by the definition of Hua et al. (see the  definition in \cite{HRW2000})). Some attention to such topic is given in~\cite{sets1}. In addition, Moran sets play an important role in multifractal analysis/formalism and especially the refined multifractal formalism, and  multifractal formalism ``aims at expressing the dimensions (the Hausdorff and packing dimensions) of the level sets in terms of the Legendre transform of some free energy function in analogy with the usual thermodynamic theory (\cite{{2021-1}, {Selmi2022-ASM1}, {Selmi2022-ASM},  {Yuan2019-N}} and references therein)." (for a full description see in~\cite{sets1}).

 A class of functions with complicated local structure consists of singular (for example, see \cite{D1932,  FSVPDC2012, G1996, {Minkowski}, {Salem1943},  {S.Serbenyuk 2017}, {Zamfirescu1981}}),  nowhere monotonic (\cite{Symon2017, Symon2019}), and nowhere differentiable functions  (for example, see \cite{{Bush1952}, {Serbenyuk-2016}}, etc.).  

An interest in such functions is explained by their connection with modelling  real objects, processes, and phenomena (in physics, economics, technology, etc.), as well as with different areas of mathematics (\cite{ACFS2011, BK2000, Kruppel2009, OSS1995, Sumi2009, Takayasu1984, TAS1993}). For example (\cite{ADF2012}),  in the theory of trigonometric series,   the Riesz products under certain conditions are singular functions. The last functions appear as conjugating homeomorphisms or Perron-Frobenius measures in fractal theory in the context of wavelets as special functions which provides a satisfactory answer to the scale problem under decomposable events in nature such as fingerprints or self-similar behavior in iterated processes.~\cite{ADF2012} To prove the suffciency for the statement on  zero sets of continuous nowhere-differentiable functions, the Takagi non-differentiable  function was applicated by  Lipinski \cite{AK2011}.  The last-mentioned paper  deals with applications of the Takagi function in detail.  A brief historical remark on functions with complicated local structure is also given in~\cite{ACFS2017, {S. Serbenyuk systemy rivnyan 2-2}} (see also references  therein).

Let us consider the Salem function, which is one of the most known examples of singular functions and  is a probability distribution for a random variable defined in terms of  $q$-ary expansions. 

Let $(P_k)$ be a fixed sequence of probability vectors $P_k$ with elements having the following properties for any $k\in\mathbb N$: 
\begin{itemize}
\item $P_k=(p_{0, k}, p_{1, k}, \dots , p_{q-1, k})$;
\item $p_{j, k}>0$ holds for any $j=\overline{0, q-1}$;
\item the equality $p_{0, k}+p_{1, k}+ \dots + p_{q-1, k}=1$  holds;
\item for an arbitrary sequence $(i_k)$, the condition $\prod^{\infty} _{k=1}{p_{i_k, k}}=0$ holds.
\end{itemize}

Let $\eta$ be a random variable, that defined by the following form 
$$
\eta=  \Delta^{q} _{\xi_1\xi_2...\xi_{k}...},
$$
where 
$k=1,2,3, \dots$,   digits $\xi_k$ are  random and taking values $0,1,\dots , q-1$ with probabilities ${p}_{0, k}, {p}_{1, k}, \dots , {p}_{q-1, k}$. That is,  $\xi_n$ are independent and $P\{\xi_k=i_k\}=p_{i_k,k}$, $i_k \in \{0, 1, \dots , q-1\}$. 

\begin{lemma}
The distribution function ${S}_{\eta}$ of the random variable $\eta$ can be
represented by
$$
{S}_{\eta}(x)=\begin{cases}
0,&\text{ $x< 0$;}\\
\beta_{i_1(x), 1}+\sum^{\infty} _{n=2} {\left({\beta}_{i_k(x), k} \prod^{k-1} _{j=1} {{p}_{i_j(x),j}}\right)},&\text{ $0 \le x<1$;}\\
1,&\text{ $x\ge 1$.}
\end{cases}
$$
\end{lemma}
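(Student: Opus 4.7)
The plan is to treat the three cases separately. For $x<0$ and $x\ge 1$ the statement is immediate: every realization of $\eta$ is of the form $\Delta^{q}_{i_1 i_2\ldots}$ with $i_k\in\{0,1,\dots,q-1\}$, so $\eta\in[0,1]$ almost surely, whence $P(\eta<x)=0$ when $x<0$ and $P(\eta<x)=1$ when $x\ge 1$. The only work is therefore in the range $0\le x<1$.

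For such an $x$, I would fix a $q$-ary representation $x=\Delta^{q}_{i_1(x) i_2(x)\ldots}$ and partition the event $\{\eta<x\}$ according to the first index $k$ at which the random digit $\xi_k$ falls strictly below $i_k(x)$. Concretely, set
\[
A_k=\bigl\{\xi_1=i_1(x),\ \ldots,\ \xi_{k-1}=i_{k-1}(x),\ \xi_k<i_k(x)\bigr\},\qquad k\ge 1.
\]
These events are pairwise disjoint, and a digit-by-digit comparison shows that $\{\eta<x\}$ coincides, up to the single boundary configuration $\{\xi_k=i_k(x)\text{ for all }k\}$, with $\bigsqcup_{k\ge 1} A_k$. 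Using the independence of the digits $\xi_k$ together with $P\{\xi_k=j\}=p_{j,k}$, each term is computed as
\[
P(A_k)=\Bigl(\prod_{j=1}^{k-1}p_{i_j(x),j}\Bigr)\sum_{m=0}^{i_k(x)-1}p_{m,k}=\beta_{i_k(x),k}\prod_{j=1}^{k-1}p_{i_j(x),j},
\]
where $\beta_{i,k}:=\sum_{m<i}p_{m,k}$. Summing over $k\ge 1$ and isolating the $k=1$ term (whose empty product equals $1$) yields precisely the formula displayed in the lemma.

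The only delicate point, and the main obstacle I foresee, is the treatment of the exceptional configuration $\{\xi_k=i_k(x)\text{ for all }k\}$, which is both (i) the reason one has to choose between $P(\eta<x)$ and $P(\eta\le x)$ and (ii) where the ambiguity in the $q$-ary expansion of $q$-adic rationals could enter. Here the last hypothesis on the system $(P_k)$ is decisive: for any sequence $(i_k)$ one has $\prod_{k=1}^{\infty}p_{i_k,k}=0$, so this event is null. Consequently the decomposition above partitions $\{\eta\le x\}$ modulo a null set, the value $S_\eta(x)$ does not depend on the chosen representation of $x$, and the claimed series expression for $S_\eta(x)$ follows.
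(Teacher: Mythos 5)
Your proof follows essentially the same route as the paper: the same decomposition of $\{\eta<x\}$ into the disjoint events $\{\xi_1=i_1(x),\dots,\xi_{k-1}=i_{k-1}(x),\xi_k<i_k(x)\}$ and the same computation of each probability via independence. The only difference is that you spell out the trivial cases $x<0$, $x\ge 1$ and the null boundary event $\{\xi_k=i_k(x)\ \forall k\}$ (handled by the hypothesis $\prod_k p_{i_k,k}=0$), details the paper leaves implicit; your argument is correct.
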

\begin{proof} This proof is based on the definition of a probability distribution. Really, let $k\in\mathbb N$; then our statement follows from the equalities 
$$
\{\eta<x\}=\{\xi_1<i_1(x)\}\cup\{\xi_1=i_1(x),\xi_2<i_2(x)\}\cup \ldots 
$$
$$
\ldots \cup\{\xi_1=i_1(x),\xi_2=i_2(x),\dots , \xi_{k-1}=i_{k-1}(x),\xi_{k}<i_{k}(x)\}\cup \ldots ,
$$
$$
P\{\xi_1=i_1(x),\xi_2=i_2(x),\dots , \xi_{k-1}=i_{k-1}(x), \xi_{k}<i_{k}(x)\}=\beta_{i_{k}(x),k}\prod^{k-1} _{j=1} {{p}_{i_{j}(x),j}}.
$$
\end{proof}

The last distribution function is the classical Salem function introduced in \cite{Salem1943} whenever the condition $P_k=P=(p_0, p_1, \dots , p_{q-1})$ holds for all $k\in\mathbb N$. That is, 
\begin{equation}
\label{eq: f2}
S(x)=\beta_{i_1}+ \sum^{\infty} _{k=2} {\left(\beta_{i_k}\prod^{k-1} _{i=1}{p_i}\right)}:=\Delta^{P} _{i_1i_2...i_k...}=s\in [0,1],
\end{equation}
where $q>1$ is a fixed positive integer,
$$
\sum^{\infty} _{k=1}{\frac{i_k}{q^k}}:=\Delta^q _{i_1i_2...i_k...}=x\in [0,1]
$$
 is the $q$-ary representation of a number $x$, and $i_k \in \{0, 1, \dots , q-1\}$. Also,
$$
\beta_{i_k}=\begin{cases}
0&\text{if $i_{k}=0$}\\
\sum^{i_{k}-1} _{l=0} {p_l}&\text{if $i_{k}\ne 0$.}
\end{cases}
$$

It is easy to see that the Salem function is a $q$-ary expansion whenever the condition $p_0=p_1=\dots = p_{q-1}=\frac 1 q$ holds.

 Many researches were devoted to the Salem function and its generalizations (for example, see \cite{ACFS2017, Kawamura2010, Symon2015, Symon2017, Symon2019} and references in these papers). Since generalizations of the Salem function can be non-differentiable functions or those that do not have a derivative on a certain set, it is useful to apply various expansions of real numbers for modelling these functions and for studying images of fractals (for example, corresponding explanations are given in \cite{sets1, sets2, 11}).  Using auxiliary maps and systems of functional equations is  an useful  technique for constructing generalizations of the Salem functions  (for example, see \cite{{Symon2024}, 3}). Also, one can note that techniques used under molelling generalizations of the Salem function in terms of alternating expansion, are more complicated than in the case of corresponding positive expansions (\cite{{Symon2021}, 2, 4}). In addition, since  many researches devoted to constructing,  investigations, and applications of various numeral systems (for example, see~\cite{{Renyi1957}, SLS2025, {S.Serbenyuk},  {preprint1-2018}, {Symon2023}} and references therein), the consideration of properties of functions (including generalizations of the Salem functions) and sets in various numeral systems is an useful tool to improve techniques of pure and applied mathematics in the topic of  mathematical pathologies.

 This research is a generalization of  \cite{Symon2021} (in which considered the case of $q$-ary expansions of arguments) and is devoted to a certain generalized Salem function, which  modificated by  the generalized shift operator, with arguments defined by the Salem function.
That is, let us consider properties of a function of the form
$$
G(s)=\gamma_{i_{n_1}}+\sum^{\infty} _{k=2}{\left(\gamma_{i_{n_k}}\prod^{k-1} _{t=1}{r_{i_{n_t}}}\right)},
$$
where
$$
\gamma_{i_{n_k}}=\begin{cases}
0&\text{if ${i_{n_k}}=0$}\\
\sum^{{i_{n_k}}-1} _{l=0} {r_l}&\text{if ${i_{n_k}}\ne 0$.}
\end{cases}
$$
Here $q>1$ is a fixed positive integer, $s$ is a value of the Salem function $S$, and $R=(r_0, r_1, \dots , r_{q-1})$  is a probability vector with  $r_j>0$ for all $j=\overline{0, q-1}$.   Also, 
 $(n_k)$ is a fixed sequence of positive integers such that the following conditions hold: $n_i\ne n_j$ for $i\ne j$;  for any $n\in\mathbb N$, there exists a number $k_0$ such that $n_{k_0}=n$.

\section{Auxiliary notions}

In this section, auxiliary definitions and explanations are considered. Let us begin with some connections with the geometry of numbers.

\subsection{Numbers having two expansions and cylinders.}

Let us consider a  representation  by the function $S_\eta$ for  values from $[0,1]$, i.e.,
\begin{equation}
\label{eq: f1}
[0,1]\ni s=\Delta^{(P_k)} _{i_1i_2...i_k...}:=\beta_{i_1, 1}+\sum^{\infty} _{k=2} {\left({\beta}_{i_k, k} \prod^{k-1} _{j=1} {{p}_{i_j,j}}\right)}= S_{\eta}(x).
\end{equation}

Let us note that certain numbers from $[0,1]$ have two different representations by expansion~\eqref{eq: f1}, i.e., 
$$
\Delta^{(P_k)} _{i_1i_2\ldots i_{m-1}i_m000\ldots}=\Delta^{(P_k)} _{i_1i_2\ldots i_{m-1}[i_m-1][q-1][q-1]\ldots}=\beta_{i_1, 1}+\sum^{m} _{k=2} {\left({\beta}_{i_k, k} \prod^{k-1} _{j=1} {{p}_{i_j,j}}\right)}.
$$
Such numbers are called \emph{$(P_k)$-rational}. The other numbers in $[0,1]$ are called \emph{$(P_k)$-irrational}.

By analogy, numbers of the form 
$$
\Delta^{P} _{i_1i_2\ldots i_{m-1}i_m000\ldots}=\Delta^{P} _{i_1i_2\ldots i_{m-1}[i_m-1][q-1][q-1]\ldots}=\beta_{i_1}+\sum^{m} _{k=2} {\left({\beta}_{i_k} \prod^{k-1} _{j=1} {{p}_{i_j}}\right)}
$$
are \emph{$P$-rational}. The other numbers in $[0,1]$ are \emph{$P$-irrational}.

Let $c_1,c_2,\dots, c_m$ be a fixed 
ordered tuple of integers such that $c_j\in\{0,1,\dots, q-~1\}$ for $j=\overline{1,m}$. 

\emph{A cylinder $\Lambda^{(P_k)} _{c_1c_2...c_m}$ of rank $m$ with base $c_1c_2\ldots c_m$} is the following set 
$$
\Lambda^{(P_k)} _{c_1c_2...c_m}\equiv\{x: x=\Delta^{(P_k)} _{c_1c_2...c_m i_{m+1}i_{m+2}\ldots i_{m+k}\ldots}\}.
$$
That is,  any cylinder $\Lambda^{(P_k)} _{c_1c_2...c_m}$ is a closed interval of the form
$$
\left[\Delta^{(P_k)} _{c_1c_2...c_m000...}, \Delta^{(P_k)} _{c_1c_2...c_m[q-1][q-1][q-1]...}\right].
$$

By analogy, in the case of representation \eqref{eq: f2}, we get that  an arbitrary cylinder $\Lambda^{P} _{c_1c_2...c_m}$ is a closed interval of the form
$$
\left[\Delta^P _{c_1c_2...c_m000...}, \Delta^P _{c_1c_2...c_m[q-1][q-1][q-1]...}\right].
$$

\subsection{Shifts of digits}

Let us consider the shift  and generalized shift operators  for the cases of expansions \eqref{eq: f2} and  \eqref{eq: f1}.

 \emph{The shift operator $\sigma$ of expansion \eqref{eq:  f1}} is of the following form
$$
\sigma(s)=\sigma\left(\Delta^{(P_k)} _{i_1i_2\ldots i_k\ldots}\right)=\beta_{i_2, 2}+\sum^{\infty} _{k=3} {\left({\beta}_{i_k, k} \prod^{k-1} _{j=1} {{p}_{i_j,j}}\right)}=\frac{1}{p_{0, 1}}\Delta^{(P_k)} _{0i_2\ldots i_k\ldots}.
$$
It is easy to see that 
\begin{equation*}
\begin{split}
\sigma^m(s) &=\sigma^m\left(\Delta^{(P_k)} _{i_1i_2\ldots i_k\ldots}\right)\\
& =\beta_{i_{m+1}, m+1}+\sum^{\infty} _{k=m+2} {\left({\beta}_{i_k, k} \prod^{k-1} _{j=1} {{p}_{i_j,j}}\right)}=\Delta^{(P_k)} _{\underbrace{0\ldots 0}_{m}i_{m+1}i_{m+2}\ldots}\prod^{m} _{l=1}{\frac{1}{p_{0, l}}}.
\end{split}
\end{equation*}
Therefore, 
\begin{equation}
\label{eq: operator3}
x=\beta_{i_{1}, 1}+\sum^{m} _{k=2} {\left({\beta}_{i_k, k} \prod^{k-1} _{j=1} {{p}_{i_j,j}}\right)}+\sigma^n(x) \prod^{m} _{t=1}{p_{i_t, t}}.
\end{equation}
By analogy,
$$
\sigma^m\left(\Delta^P _{i_1i_2\ldots i_k\ldots}\right) =\beta_{i_{m+1}}+\sum^{\infty} _{k=m+2} {\left({\beta}_{i_k} \prod^{k-1} _{j=1} {{p}_{i_j}}\right)}=\frac{1}{p^m _0}\Delta^{P} _{\underbrace{0\ldots 0}_{m}i_{m+1}i_{m+2}\ldots}=\Delta^{P} _{i_{m+1}i_{m+2}\ldots}.
$$

Let us consider the generalized shift operator (see \cite{Symon2021} and references therein) for the case of representation \eqref{eq: f1}.

This  idea includes the following:  any number from a certain closed  interval can be represented by two fixed sequences $(P_k)$  and  $(i_k)$. The generalized shift operator maps the preimage into a number represented by using the following two sequences $(P_1, P_2,\dots , P_{m-1}, P_{m+1}, P_{m+2}, \dots )$ and $(i_1, i_2,\dots , i_{m-1}, i_{m+1}, i_{m+2}, \dots )$. 

Suppose a number $x\in [0,1]$ is  represented by representation \eqref{eq: f1}. Then
$$
s=v_{m-1}+\beta_{i_m, m}\prod^{m-1} _{k=1}{p_{i_k, k}}+p_{i_m,m}w_{m+1},
$$
where 
$$
v_{m-1}=\beta_{i_1, 1}+\sum^{m-1} _{k=2} {\left({\beta}_{i_k, k} \prod^{k-1} _{j=1} {{p}_{i_j,j}}\right)}=\Delta^{(P_k)} _{i_1i_2\ldots i_{m-1}000...}
$$
and
$$
w_{m+1}=\left(\beta_{i_{m+1}, m+1}+\sum^{\infty} _{t=m+2} {\left({\beta}_{i_t, t} \prod^{t-1} _{u=1} {{p}_{i_u, u}}\right)}\right)\prod^{m-1} _{j=1}{p_{i_j, j}}=\sigma^m (s)\prod^{m-1} _{j=1}{p_{i_j, j}}.
$$
Hence,
\begin{equation}
\label{eq: generalized shift 1}
\sigma_m(s)=\frac{x- (1-p_{i_m, m})v_{m-1}-\beta_{i_m, m}\prod^{m-1} _{k=1}{p_{i_k, k}}}{p_{i_m, m}}.
\end{equation}
One can remark that $\sigma(s)=\sigma_1(s)$.

By analogy with explanations in \cite{Symon2021}, let  us consider some remarks on compositions of shift operators in the case of our representations of numbers. 

\begin{remark}
Suppose $x=\Delta^{(P_k)} _{i_1i_2...i_k...}$ and $m$ is a fixed positive integer. Then
$$
\sigma_{m}(s)=\sigma_{m}\left(\Delta^{(P_k)} _{i_1i_2...i_k...}\right)=\Delta^{(P_k)\setminus \{P_m\}} _{i_1i_2...i_{m-1}i_{m+1}...},
$$
\begin{align*}
\sigma_{m}\circ \sigma_{m}(s)&=\sigma^{2} _{m}(s)=\sigma^{2} _{m}\left(\Delta^{(P_k)} _{i_1i_2...i_k...}\right)\\
&=\sigma_m\left(\sigma_m(\Delta^{(P_k)} _{i_1i_2...i_k...})\right)=\sigma_{m}\left(\Delta^{(P_k)\setminus \{P_m\}} _{i_1i_2...i_{m-1}i_{m+1}...}\right)=\Delta^{(P_k)\setminus \{P_m, P_{m+1}\}} _{i_1i_2...i_{m-1}i_{m+2}...},
\end{align*}
and
$$
\underbrace{\sigma_{m}\circ \ldots \circ \sigma_{m}(s)}_{n}=\sigma^{n} _{m}\left(\Delta^{(P_k)} _{i_1i_2...i_k...}\right)=\Delta^{(P_k)\setminus \{P_m, P_{m+1}, \dots , P_{m+n-1}\}} _{i_1i_2...i_{m-1}i_{m+n}i_{m+n+1}...}.
$$

For the case of the shift operator, we get
\begin{equation*}
\begin{split}
\sigma^n(s) &=\sigma^n\left(\Delta^{(P_k)} _{i_1i_2\ldots i_k\ldots}\right)=\Delta^{(P_k)\setminus \{P_1, P_{2}, \dots , P_{n}\}} _{i_{n+1}i_{n+2}..i_{n+k}...}\
\end{split}
\end{equation*}
\end{remark}
\begin{remark} (By analogy with \cite{Symon2021}).
Using the last remark, now let us describe a more general case. Suppose that $n_1$ and $n_2$ are two positive integers. Then
\begin{equation}
\label{eq: 2-composition}
\sigma_{n_2}\circ \sigma_{n_1}(s)=\sigma_{n_2}\left(\Delta^{(P_k)\setminus \{P_{n_1}\}} _{i_1i_2...i_{n_1-1}i_{n_1+1}...}\right)=\begin{cases}
\Delta^{(P_k)\setminus \{P_{n_1}, P_{n_2}\}} _{i_1i_2...i_{n_2-1}i_{n_2+1}...i_{n_1-1}i_{n_1+1}...}&\text{if $n_1>n_2$}\\
\Delta^{(P_k)\setminus \{P_{n_1}, P_{n_2+1}\}} _{i_1i_2...i_{n_1-1}i_{n_1+1}...i_{n_2-1}i_{n_2}i_{n_2+2}...}&\text{if $n_1<n_2$}\\
\Delta^{(P_k)\setminus \{P_{n_0}, P_{n_0+1}\}} _{i_1i_2...i_{n_0-1}i_{n_0+2}...}&\text{if $n_1=n_2=n_0$.}
\end{cases}
\end{equation}
That is, the last rule is of the following form:

\begin{center}
\begin{tabular}{|c|c| c| c|}
\hline
The case for $n_2$ & $ n_1=n_2=n_0$ & $ n_2> n_1$ & $ n_2 < n_1$\\
\hline 
What will a digit be deleted? & $i_{n_0+1}$ & $i_{n_2+1}$ & $i_{n_2}$  \\
\hline
\end{tabular}
\end{center}

Let us remark that, in the case  of expansions~\eqref{eq: f2}, we have
\begin{equation}
\label{eq: generalized shift 2}
\sigma_m(\Delta^P _{i_1i_2...i_{k-1}i_k...})=\frac{x- (1-p_{i_m})v_{m-1}-\beta_{i_m}\prod^{m-1} _{k=1}{p_{i_k}}}{p_{i_m}}=\Delta^P _{i_1i_2...i_{m-1}i_{m+1}...},
\end{equation}
where $v_{m-1}=\Delta^P _{i_1i_2...i_{m-1}000...}$.
\end{remark}

Let us note some auxiliary property which is useful for modelling the main object of this research. 
\begin{remark} (An analogy with \cite{Symon2021}).
\label{rm: the main remark}
Suppose that numbers $s\in [0,1]$ are represented in terms of  representation~\eqref{eq: f2} and we need to delete the digits $i_{n_1},i_{n_2}, \dots , i_{n_k}$ (according to this fixed order) by using a composition of the generalized shift operators in $s=\Delta^P _{i_1i_2...i_k...}$. Here $(n_k)$ is a finite fixed sequence of positive integers such that $n_i\ne n_j$ for $i\ne j$. That is, we model
$$
s_0=\Delta^P _{i_1i_2...i_{n_1-1}i_{n_1+1}...i_{n_2-1}i_{n_2+1}...i_{n_k-1}i_{n_k+1}i_{n_k+2}...i_{n_k+t}...},~~~\mbox{where}~t=1,2,3, \dots .
$$

Using \eqref{eq: 2-composition}, we obtain a certain sequence $(\bar n_k)$ of positive integers. That is, for all $i=\overline{1,k}$, 
$$
\bar n_i= n_i-\varrho_i,
$$
where $\varrho_i$ is the number of all numbers which are less than $n_i$ in the finite sequence $n_1, n_2, \dots , n_i$.

So,
$$
s_0=\sigma_{\bar n_k}\circ \sigma_{\bar n_{k-1}} \circ \dots \circ \sigma_{\bar n_2} \circ \sigma_{\bar n_1}(s).
$$
\end{remark}

\begin{lemma}
In the case of expansion \eqref{eq: f2}, the generalized shift operator has the following properties:
\begin{itemize}
\item The mapping $\sigma_m$ is continuous at each point of the interval $\left(\inf\Delta^P _{c_1c_2...c_m}, \sup\Delta^P _{c_1c_2...c_m}\right)$. The endpoints of $\Delta^P _{c_1c_2...c_m}$ are  points of discontinuity of the mapping.
\item The mapping $\sigma_m$ has a derivative almost everywhere (with respect to
the Lebesgue measure). If the mapping  has a derivative at the point $s=\Delta^P _{i_1i_2...i_k...}$, then $\left(\sigma_m\right)^{'}=\frac{1}{p_{i_m}}$.
\end{itemize}
\end{lemma}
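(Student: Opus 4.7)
My plan is to read \eqref{eq: generalized shift 2} as exhibiting $\sigma_m$ as a piecewise affine map whose pieces correspond to the rank-$m$ cylinders $\Lambda^P_{c_1\ldots c_m}$, and then to harvest both bullets of the lemma from this picture. First I fix such a cylinder and any point $x$ strictly inside it; because the $q^m$ rank-$m$ cylinders overlap only at their endpoints, every $P$-ary representation of $x$ must begin with the prescribed block $c_1,\ldots,c_m$. Consequently the ingredients $v_{m-1}$, $\beta_{i_m}$, $p_{i_m}$, and $\prod_{k=1}^{m-1}p_{i_k}$ appearing in \eqref{eq: generalized shift 2} reduce to constants determined by the base alone, so on the open cylinder one has
\[
\sigma_m(x)=\frac{x-(1-p_{c_m})\,v_{m-1}-\beta_{c_m}\prod_{k=1}^{m-1}p_{c_k}}{p_{c_m}},
\]
i.e.\ $\sigma_m$ is affine there with slope $1/p_{c_m}$. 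This simultaneously yields continuity on the open cylinder, unambiguous definition at any $P$-rational point interior to the cylinder (its two representations share the initial block $c_1\ldots c_m$, so \eqref{eq: generalized shift 2} returns the same output), and, once I know differentiability holds almost everywhere, the value $(\sigma_m)'(x)=1/p_{i_m}$.

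For the discontinuity at an endpoint $x_0$ of $\Lambda^P_{c_1\ldots c_m}$ I would compare the two $P$-ary representations of $x_0$. A short case analysis — according to whether $x_0$ is the left or right endpoint, and propagating a carry through earlier digits if $c_m\in\{0,q-1\}$ — shows that these two representations always disagree in the $m$-th position. Plugging each into \eqref{eq: generalized shift 2} then produces two different values, or equivalently the affine maps supplied by the two open rank-$m$ cylinders adjacent to $x_0$ fail to meet at $x_0$, forcing a jump discontinuity of $\sigma_m$ there.

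The almost-everywhere assertion is now essentially free: the union of the boundaries of the $q^m$ rank-$m$ cylinders is countable, hence Lebesgue null, and on its complement $\sigma_m$ agrees locally with an affine map of slope $1/p_{i_m}$, so it is differentiable with exactly that derivative. I expect the only mildly delicate step to be the digit-level bookkeeping at cylinder endpoints, in particular the carry propagation past position $m$ when $c_m=q-1$; but this is purely combinatorial and merely confirms the key fact that the two representations of any endpoint always differ in the $m$-th digit.
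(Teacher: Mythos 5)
Your treatment of the first bullet and of the differentiability claim is sound and is exactly the ``piecewise linear'' observation that the paper's own one-line proof appeals to: on the interior of each rank-$m$ cylinder every representation of $x$ starts with the fixed block, so \eqref{eq: generalized shift 2} is affine there with slope $1/p_{c_m}$, which gives continuity at interior points and a derivative equal to $1/p_{i_m}$ off the (finite, hence null) set of cylinder endpoints.

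The gap is in the endpoint part, and it sits precisely in the carry case you dismiss as routine bookkeeping. Your ``key fact'' --- that the affine branches on the two adjacent rank-$m$ cylinders never meet at a shared endpoint --- is false whenever that endpoint is also an endpoint of the rank-$(m-1)$ cylinder, i.e.\ whenever the two representations of $x_0$ differ \emph{before} position $m$ and not only at position $m$. Concretely, take $q=2$, $m=2$, $x_0=\Delta^P_{0111\ldots}=\Delta^P_{1000\ldots}=p_0$, the right endpoint of $\Lambda^P_{01}$ and left endpoint of $\Lambda^P_{10}$. Deleting the second digit from the first representation gives $\Delta^P_{0111\ldots}=p_0$, from the second gives $\Delta^P_{1000\ldots}=p_0$: the two outputs coincide, and likewise the one-sided limits coincide ($\lim_{s\to x_0-}\sigma_2(s)=\sup\Lambda^P_{0}=p_0$ and $\lim_{s\to x_0+}\sigma_2(s)=\inf\Lambda^P_{1}=p_0$), so $\sigma_2$ is \emph{continuous} at this cylinder endpoint. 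The general pattern is the same: ``the two representations disagree in the $m$-th digit'' does not imply the two values of \eqref{eq: generalized shift 2} differ, because after deletion of the $m$-th digit the tails $[q-1][q-1]\ldots$ and $000\ldots$ attached to the two different $(m-1)$-blocks can represent the same number. Your argument therefore proves a jump (of size $\prod_{j=1}^{m-1}p_{c_j}$) only at those endpoints where the two adjacent bases differ solely in the $m$-th digit; at endpoints shared with a rank-$(m-1)$ cylinder the map is in fact continuous, so the blanket claim you set out to establish cannot be rescued by tightening the bookkeeping --- it needs the exceptional endpoints excluded. Note that the paper itself offers no help here: its proof is a bare appeal to the definition, to \eqref{eq: generalized shift 1}, and to piecewise linearity, and does not confront this case either.
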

 
  All  properties follow from the definition of  $\sigma_m$  and equality \eqref{eq: generalized shift 1}.  It is easy to see that $\sigma_m$ is a piecewise linear function.


\section{The main results}

Suppose $q>1$ is a fixed positive integer, $P=(p_0, p_1, \dots , p_{q-1})$, and  $R=(r_0, r_1, \dots , r_{q-1})$  are finite fixed sequences of numbers such that the conditions  $p_j>0, r_j >0$, $j=\overline{0, q-1}$,  as well as $p_0+p_1+\dots + p_{q-1}=1$ and $r_0+r_1+\dots + r_{q-1}=1$ hold.

Suppose $(n_k)$ is a fixed sequence of positive integers such that $n_i\ne n_j$ for $i\ne j$ and such that  for any $n\in\mathbb N$ there exists a number $k_0$ for which the condition $n_{k_0}=n$ holds. Assume that $\bar n_k=n_k-\varrho_k$ for all $k=1,2, 3, \dots$, where $\varrho_k$ is the number of all numbers which are less than $n_k$ in the finite sequence $n_1, n_2, \dots , n_k$.

Suppose
$$
\beta_{i_1}+ \sum^{\infty} _{k=2} {\left(\beta_{i_k}\prod^{k-1} _{i=1}{p_i}\right)}=\Delta^{P} _{i_1i_2...i_k...}=s\in [0,1],
$$
$$
\beta_{i_{k}}=\begin{cases}
0&\text{if ${i_{k}}=0$}\\
\sum^{{i_{k}}-1} _{l=0} {p_l}&\text{if ${i_{k}}\ne 0$,}
\end{cases}
$$
and
$$
\gamma_{i_{n_k}}=\begin{cases}
0&\text{if ${i_{n_k}}=0$}\\
\sum^{{i_{n_k}}-1} _{l=0} {r_l}&\text{if ${i_{n_k}}\ne 0$.}
\end{cases}
$$

\begin{theorem}
Let $R=\{r_0,r_1,\dots , r_{q-1}\}$ be a fixed tuple of real numbers such that $r_j\in (-1,1)$, where $j=\overline{0,q-1}$, $\sum_j{p_j}=1$, and $0=\gamma_0<\gamma_j=\sum^{j-1} _{j=0}{p_j}<1$ for all $j\ne 0$. Then the following system of functional equations
\begin{equation}
\label{eq: system-q}
f\left(\sigma_{\bar n_{k-1}}\circ \sigma_{\bar n_{k-2}}\circ \ldots \circ \sigma_{\bar n_1}(s)\right)=\gamma_{i_{n_k}}+r_{i_{n_k},}f\left(\sigma_{\bar n_{k}}\circ \sigma_{\bar n_{k-1}}\circ \ldots \circ \sigma_{\bar n_1}(s)\right),
\end{equation}
where $s=\Delta^P _{i_1i_2...i_k...}$, $k=1,2, \dots$, and $\sigma_0(s)=s$, has the unique solution
$$
G(s)=\gamma_{i_{n_1}}+\sum^{\infty} _{k=2}{\left(\gamma_{i_{n_k}}\prod^{k-1} _{t=1}{r_{i_{n_t}}}\right)}
$$
in the class of determined and bounded on $[0, 1]$ functions. 
\end{theorem}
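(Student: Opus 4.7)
The plan is to treat the system as a one-step recursion and iterate it. Writing $T_k(s):=\sigma_{\bar n_k}\circ\cdots\circ\sigma_{\bar n_1}(s)$ with $T_0(s)=s$, the $k$-th equation reads $f(T_{k-1}(s))=\gamma_{i_{n_k}}+r_{i_{n_k}}f(T_k(s))$. Substituting the equation at level $k$ into the equation at level $k-1$ and iterating $m$ times produces
\begin{equation*}
f(s) = \gamma_{i_{n_1}} + \sum_{k=2}^{m}\gamma_{i_{n_k}}\prod_{t=1}^{k-1}r_{i_{n_t}} + \left(\prod_{t=1}^{m}r_{i_{n_t}}\right)f(T_m(s)),
\end{equation*}
which is the identity that drives everything else.

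Uniqueness is then immediate. Since $|r_j|<1$ for each of the finitely many indices $j\in\{0,\ldots,q-1\}$, the number $\rho:=\max_j|r_j|$ is strictly less than $1$, so the leading factor in the remainder satisfies $\left|\prod_{t=1}^{m}r_{i_{n_t}}\right|\le\rho^m$. Combined with the boundedness of $f$, this forces the remainder to vanish as $m\to\infty$, so $f(s)=G(s)$ at every $s\in[0,1]$. The same geometric estimate, together with $0\le\gamma_{i_{n_k}}<1$, ensures absolute convergence of the series defining $G$, with $|G(s)|\le(1-\rho)^{-1}$; hence $G$ is a well-defined bounded function on $[0,1]$.

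It remains to verify that $G$ actually satisfies \eqref{eq: system-q}. I would do this by splitting off the first term of the defining series, factoring out $r_{i_{n_1}}$, and identifying the remaining tail with $G(T_1(s))$; the $k$-th equation then follows from the same manipulation applied to $T_{k-1}(s)$. The essential input is Remark \ref{rm: the main remark}, which says that $\sigma_{\bar n_k}\circ\cdots\circ\sigma_{\bar n_1}$ deletes the digits $i_{n_1},\ldots,i_{n_k}$ from the $P$-expansion of $s$ in the prescribed order, with the correction $\bar n_k=n_k-\varrho_k$ realigning positions after each deletion. I expect this reindexing to be the main obstacle: the definition of $G$ reads off digits at positions $n_1,n_2,\ldots$ of its argument, and one must verify that after the shifts these positions still produce the intended digits $i_{n_k},i_{n_{k+1}},\ldots$ of the original $s$. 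The hypothesis that $(n_k)$ is an injection of $\mathbb{N}$ onto itself, together with the index-correction formula, should supply exactly the bookkeeping needed to match the two series.
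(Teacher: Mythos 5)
Your proposal follows essentially the same route as the paper: iterate the functional equations, and use boundedness of $f$ together with $\left|\prod_{t=1}^{m} r_{i_{n_t}}\right|\le \rho^{m}$, $\rho=\max_{j}|r_j|<1$, to make the remainder vanish, which identifies every bounded solution with the series $G$. You are in fact slightly more careful than the paper, which states the geometric estimate without absolute values (despite allowing negative $r_j$) and never explicitly carries out the verification that $G$ itself satisfies the system; the digit-bookkeeping step you sketch via the deletion property of $\sigma_{\bar n_k}\circ\cdots\circ\sigma_{\bar n_1}$ is precisely that omitted, routine check.
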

\begin{proof}
The function $G$ is a determined on $[0,1]$ function. Hence we obtain the following using system~\eqref{eq: system-q}: 
\begin{align*}
G(s) &= \gamma_{i_{n_1}}+r_{i_{n_1}}f(\sigma_{\bar n_1}(s))\\
&=\gamma_{i_{n_1}}+r_{i_{n_1}}(\gamma_{i_{n_2}}+r_{i_{n_2}}f(\sigma_{\bar n_2}\circ\sigma_{\bar n_1}(s)))=\dots \\
\dots &=\gamma_{i_{n_1}}+\gamma_{i_{n_2}}r_{i_{n_1}}+\gamma_{i_{n_3}}r_{i_{n_1}}r_{i_{n_2}}+\dots +\gamma_{i_{n_k}}\prod^{k-1} _{j=1}{r_{i_{n_j}}}+\left(\prod^{k} _{t=1}{r_{i_{n_t}}}\right)f(\sigma_{\bar n_k}\circ \dots \circ \sigma_{\bar n_2}\circ \sigma_{\bar n_1}(s)).
\end{align*}
Whence,
$$
G(s)=\gamma_{i_{n_1}}+\sum^{\infty} _{k=2}{\left(\gamma_{i_{n_k}}\prod^{k-1} _{j=1}{r_{i_{n_j}}}\right)},
$$
since $G, f$ are  determined and bounded on $[0,1]$ functions and 
$$
\lim_{k\to\infty}{f(\sigma_{\bar n_k}\circ \dots \circ \sigma_{\bar n_2}\circ \sigma_{\bar n_1}(s))\prod^{k} _{t=1}{r_{i_{n_t}}}}=0,
$$
where
$$
\prod^{k} _{t=1}{r_{i_{n_t}}}\le \left( \max_{0\le u\le q-1}{r_u}\right)^k\to 0, ~~~ k\to \infty.
$$
\end{proof}

\begin{theorem} The following properties hold: 
\begin{itemize}
\item The function $G$ is continuous at any $P$-irrational point of $[0,1]$.
\item The function $G$ is continuous at the $P$-rational point
$$
s_0=\Delta^{P} _{i_1i_2...i_{m-1}i_m 000...}=\Delta^{P} _{i_1i_2...i_{m-1}[i_m-1] [q-1][q-1][q-1]...}
$$
whenever a sequence $(n_k)$ is such that the conditions  $k_0=\max\{k:  n_k \in \{1,2,\dots, m\}\}$, $n_{k_0}=m$, and $n_1, n_2, \dots , n_{k_0-1}\in \{1, 2, \dots , m-1\}$  hold. Otherwise, the $P$-rational  point  $s_0$ is a point of discontinuity.
\item The set $G_D$ of all points of discontinuity of the function $G$ is a countable, finite, or empty set. It  depends on the sequence $(n_k)$.
\end{itemize}
\end{theorem}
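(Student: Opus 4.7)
The proof rests on the series $G(s)=\sum_{k\ge 1}\gamma_{i_{n_k}}\prod_{t<k}r_{i_{n_t}}$ together with the uniform tail bound
\[
\Bigl|\sum_{k>K}\gamma_{i_{n_k}}\prod_{t<k}r_{i_{n_t}}\Bigr|\le\frac{(\max_j|\gamma_j|)\,\rho^{K}}{1-\rho},\qquad\rho:=\max_j|r_j|<1,
\]
which reduces every continuity question to an agreement of finitely many digits. For a $P$-irrational $s_0=\Delta^P_{i_1 i_2\ldots}$ the representation is unique, so $s_0$ lies in the interior of every cylinder $\Lambda^P_{i_1\ldots i_N}$. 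Given $\varepsilon>0$ I pick $K$ so that the tail above is less than $\varepsilon/2$, set $N:=\max(n_1,\ldots,n_K)$, and take $\delta>0$ so small that $|s-s_0|<\delta$ forces $s\in\Lambda^P_{i_1\ldots i_N}$; then $s$ and $s_0$ share their first $N$ digits, hence the first $K$ summands of $G(s)$ and $G(s_0)$, while the two tails are each below $\varepsilon/2$. This settles continuity at every $P$-irrational point.

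At a $P$-rational $s_0=\Delta^P_{i_1\ldots i_m 000\ldots}=\Delta^P_{i_1\ldots[i_m-1](q-1)(q-1)\ldots}$ the same tail argument yields
\[
\lim_{s\to s_0^+}G(s)=G_1,\qquad \lim_{s\to s_0^-}G(s)=G_2,
\]
where $G_\ell$ is the $G$-series evaluated at the $\ell$-th digit sequence of $s_0$; continuity at $s_0$ therefore reduces to the single equation $G_1=G_2$. Let $k_0:=\max\{k:n_k\le m\}$ (so $n_k>m$ for every $k>k_0$) and set $B:=\prod_{t=1}^{k_0}r_{j^{(2)}_{n_t}}$. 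In $G_2$ every term with $k>k_0$ carries the digit $q-1$; using $\gamma_{q-1}=1-r_{q-1}$ and $\sum_{j\ge 0}r_{q-1}^{j}=(1-r_{q-1})^{-1}$, this geometric tail telescopes exactly to $B$, while the corresponding tail in $G_1$ vanishes since $\gamma_0=0$. Consequently $G_1-G_2$ collapses to a finite algebraic expression in $i_1,\ldots,i_m$ and $r_0,\ldots,r_{q-1}$ whose shape depends only on how $\{n_1,\ldots,n_{k_0}\}$ is embedded in $\{1,\ldots,m\}$.

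Under the theorem's hypothesis one has $k_0=m$, $n_m=m$, and $(n_1,\ldots,n_{m-1})$ is a permutation of $(1,\ldots,m-1)$; the first $m-1$ summands of $G_1$ and $G_2$ then coincide, the $k=m$ discrepancy is $(\gamma_{i_m}-\gamma_{i_m-1})\prod_{t<m}r_{i_{n_t}}=r_{i_m-1}\prod_{t<m}r_{i_{n_t}}$, which equals $B$, so $G_1=G_2$ and $G$ is continuous at $s_0$. When the hypothesis fails, I must verify that this telescoping cancellation breaks: either some $k<k_0$ satisfies $n_k>m$, which injects a factor $\gamma_{q-1}=1-r_{q-1}\ne 0$ into one partial sum that the other cannot match, or $n_{k_0}\in\{1,\ldots,m-1\}$ and the unique index $k^*<k_0$ with $n_{k^*}=m$ places the factor $r_{i_m-1}$ inside $B$ at the wrong height so it no longer cancels the jump at $k^*$; in either sub-case a direct inspection of the resulting finite polynomial in the $r_j$'s, exploiting $|r_j|<1$ and $r_{i_m-1}\ne 0$, shows it cannot vanish. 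This converse combinatorial bookkeeping is the main obstacle of the proof. The last bullet is then immediate: $G_D$ is contained in the countable set of $P$-rationals, hence at most countable; if the hypothesis holds for every $m$ (equivalently $n_k=k$ for all $k$) then $G_D=\emptyset$, if it fails for only finitely many $m$ then $G_D$ is finite, and otherwise $G_D$ is countably infinite.
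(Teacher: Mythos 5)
Your first two bullets are handled correctly and by essentially the paper's own route: continuity at $P$-irrational points via agreement of the digits $i_{n_1},\dots,i_{n_K}$ plus a geometric tail estimate, and continuity at a $P$-rational point reduced to the equality $G_1=G_2$ of the two series obtained from the two representations, which you verify under the stated hypothesis through the telescoping identity $\gamma_{q-1}\sum_{j\ge 0}r_{q-1}^{j}=1$; this part is in fact more explicit than the paper's argument, which only asserts $y_1=y_2$ under the hypothesis.

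The genuine gap is the converse claim, ``otherwise the $P$-rational point $s_0$ is a point of discontinuity'', which is part of the statement being proved. You reduce it to the non-vanishing of a finite expression in $r_0,\dots,r_{q-1}$ and then appeal to ``direct inspection'', yourself calling this the main obstacle; but nothing you invoke ($|r_j|<1$, $r_{i_m-1}\ne 0$) rules out cancellation, since after subtracting the telescoped tail $B$ the remaining expression contains terms of both signs (e.g.\ a factor $r_{i_m}-r_{i_m-1}$), so an actual argument is missing. (To be fair, the paper's own proof is equally silent on this direction.) A clean way to close it under the standing assumptions $r_j>0$, $\sum_j r_j=1$: observe that $G_1$ and $G_2$ are the values of the classical, strictly increasing Salem function built from $R$ at the $q$-ary digit strings $(d_k)$ and $(e_k)$ with $d_k=i^{(1)}_{n_k}$, $e_k=i^{(2)}_{n_k}$; hence $G_1=G_2$ if and only if these strings represent the same $q$-ary number, i.e.\ agree up to some index $j$ at which $d_j=e_j+1$, with $d_k=0$ and $e_k=q-1$ for all $k>j$. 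The strings differ exactly at $k^{*}$ (where $n_{k^{*}}=m$; digits $i_m$ versus $i_m-1$) and at every $k$ with $n_k>m$ (digits $0$ versus $q-1$), and infinitely many $k$ satisfy $n_k>m$; a short check then shows the required pattern occurs precisely when $n_k<m$ for all $k<k^{*}$ and $n_k>m$ for all $k>k^{*}$, i.e.\ exactly under the theorem's hypothesis. This settles the converse (and with it the third bullet) without the combinatorial bookkeeping you left open.
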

\begin{proof}
Let us note that a certain fixed function $G$ is given by a fixed sequence $(n_k)$ described above. One can write our  mapping by the following:
$$
G: s=\Delta^P _{i_1i_2...i_k...}\to ~\gamma_{i_{n_1}}+\sum^{\infty} _{k=2}{\left(\gamma_{i_{n_k}}\prod^{k-1} _{l=1}{r_{i_{n_l}}}\right)}=\Delta^{G(s)} _{i_{n_1}i_{n_2}...i_{n_k}...}=G(s)=y.
$$

Let $s_0=\Delta^P _{i_1i_2...i_k...}$ be an arbitrary $P$-irrational number from $[0,1]$. Let $s=\Delta^P _{\theta_1\theta_2...\theta_k...}$ be a  $P$-irrational number such that the condition $\theta_{n_j}=i_{n_j}$ holds  for all $j=\overline{1,k_0}$, where $k_0$ is a certain positive integer. 
Then
$$
G(x_0)=\Delta^{G(s)} _{i_{n_1}i_{n_2}...i_{n_{k_0}}i_{n_{k_0+1}}...},
$$
$$
G(x)=\Delta^{G(s)} _{i_{n_1}i_{n_2}...i_{n_{k_0}}\theta_{n_{k_0+1}}...\theta_{n_{k_0}+k}...}.
$$
Since $G$ is a bounded function, $ G(s) \le 1$, we have 
\begin{align*}
&G(s)-G(s_0) = \\
&=\left(\prod^{k_0} _{j=1}{r_{i_{n_j}}}\right) \left(\gamma_{\theta_{n_{k_0+1}}}+\sum^{\infty} _{t=2}{\left(\gamma_{\theta_{n_{k_0+t}}}\prod^{k_0+t-1} _{u=k_0+1}{r_{\theta_{n_u}}}\right)}-\gamma_{i_{n_{k_0+1}}}-\sum^{\infty} _{t=2}{\left(\gamma_{i_{n_{k_0+t}}}\prod^{k_0+t-1} _{u=k_0+1}{r_{i_{n_u}}}\right)}\right) \\
&=\left(\prod^{k_0} _{j=1}{r_{i_{n_j}}}\right)\left(G(\sigma_{\bar n_{k_0}}\circ\ldots \sigma_{\bar n_2} \circ \sigma_{\bar n_1}(s))-G(\sigma_{\bar n_{k_0}}\circ\ldots \sigma_{\bar n_2} \circ \sigma_{\bar n_1}(s_0))\right),
\end{align*}
and
$$
|G(s)-G(s_0)|\le \delta\prod^{k_0} _{j=1}{r_{i_{n_j}}}\le \delta\left(\max\{r_0,\dots , r_{q-1}\}\right)^{k_0}\to 0 ~~~~~~~(k_0\to\infty). 
$$
Here $\delta$ is a certain real number.

So, $\lim_{s\to s_0}{G(x)}=G(s_0)$, i.e., the function $G$ is continuous at any $P$-irrational point. 

Let $s_0$ be a $P$-rational number, i.e.,
$$
s_0=s^{(1)} _0=\Delta^{P} _{i_1i_2...i_{m-1}i_m 000...}=\Delta^{P} _{i_1i_2...i_{m-1}[i_m-1] [q-1][q-1][q-1]...}=s^{(2)} _0.
$$
Then there exist positive integers $k^{*}$ and $k_0$ such that
$$
y_1=G\left(s^{(1)} _0\right)=\Delta^{G(s)} _{i_{n_1}i_{n_2}...i_{n_{k^{*}}}...i_{n_{k_0}}000...},
$$
$$
y_2=G\left(s^{(2)} _0\right)=\Delta^{G(s)} _{i_{n_1}i_{n_2}...i_{n_{k^{*}-1}}[i_{n_{k^{*}}}-1]i_{n_{k^{*}+1}}...i_{n_{k_0}}[q-1][q-1][q-1]...}.
$$
 Here $n_{k^{*}}=m$, $n_{k^{*}}\le n_{k_0}$, as well as $k_0$ is a number such that $i_{n_{k_0}}\in\{i_1, \dots, i_{m-1}, i_m\}$ and ${k_0}$ is the maximum position of any number from  $\{1,2,\dots , m\}$ in the sequence $(n_k)$.

Using the case of a $P$-irrational number, let us consider the limits
$$
\lim_{s\to s_0+0}{G(s)}=\lim_{s\to s^{(1)} _0}{G(s)}=G(x^{(1)} _0)=y_1,~~~\lim_{s\to s_0-0}{G(s)}=\lim_{s\to s^{(2)} _0}{G(s)}=G(s^{(2)} _0)=y_2.
$$

Whence $y_1=y_2$ whenever a sequence $(n_k)$ is such that the conditions $n_{k_0}=m$, $k_0=\max\{k:  n_k \in \{1,2,\dots, m\}\}$,  and $n_1, n_2, \dots , n_{k_0-1}\in \{1, 2, \dots , m-1\}$ hold.

So,  the set $G_D$ of all points of discontinuity of the function $G$ is a countable, finite, or empty set. It  depends on the sequence $(n_k)$.
\end{proof}

\begin{remark}
From the last theorem it is follows the following relations between a sequence $(n_k)$, $k=1, 2, 3, \dots$, and  the set $G_D$ of all points of discontinuity of the function $G$.

\begin{center}
\begin{tabular}{|c|c| c| c|}
\hline
Relations & $(n_k)=(k)$ &   $n_k\ne k$ for a finite number of $k$  & the other case\\
\hline 
 The set  $G_D$ is ...& empty & finite & countable   \\
\hline
\end{tabular}
\end{center}
\end{remark}

Let us consider the monotonicity and differential properties.

Suppose $(n_k)$ is a fixed sequence and $c_{n_1}, c_{n_2}, \dots , c_{n_t}$ is a fixed tuple of numbers $c_{n_j}\in\{0,1,\dots , q-1\}$, where $j=\overline{1,t}$ and $t$ is a fixed positive integer.

We begin with the following set
$$
\mathbb S_{P, (c_{n_t})}\equiv \left\{x: x=\Delta^P _{i_1i_2...i_{n_1-1}\overline{c_{n_1}}i_{n_1+1}...i_{n_2-1}\overline{c_{n_2}}...i_{n_{t}-1}\overline{c_{n_t}}i_{n_t+1}...i_{n_t+k}...}\right\},
$$
where $k=1,2,\dots $, and $\overline{c_{n_j}}\in \{c_{n_1}, c_{n_2}, \dots , c_{n_t}\}$ for all $j=\overline{1,t}$. This set has non-zero Lebesgue measure (for example, similar sets are investigated in  terms of other representations of numbers in~\cite{S. Serbenyuk alternating Cantor series 2013}). It is easy to see that 
$\mathbb S_{P, (c_{n_t})}$ maps to
$$
G\left(\mathbb S_{P, (c_{n_t})}\right)\equiv\left\{y: y=\Delta^{G(s)} _{c_{n_1} c_{n_2}\dots  c_{n_t}i_{n_{t+1}}...i_{n_{t+k}}...}\right\}
$$
under $G$.

For a  value $\mu_G \left(\mathbb S_{P, (c_{n_t})}\right)$ of the increment, the following is true.
$$
\mu_G \left(\mathbb S_{P, (c_{n_t})}\right)=G\left(\sup\mathbb S_{P, (c_{n_t})}\right)-G\left(\inf\mathbb S_{P, (c_{n_t})}\right)=\Delta^{G(s)} _{c_{n_1} c_{n_2}\dots  c_{n_t}[q-1][q-1]...}-\Delta^{G(s)} _{c_{n_1} c_{n_2}\dots  c_{n_t}00...}=\prod^{t} _{j=1}{r_{c_{n_j}}}.
$$
Let us  consider the intervals
$\left[\inf\mathbb S_{P, (c_{n_t})}, \sup\mathbb S_{P, (c_{n_t})}\right]$. That is,  
\begin{align*}
&\sup\mathbb S_{P, (c_{n_t})}-\inf\mathbb S_{P, (c_{n_t})}= \\
&=\Delta^{P} _{\underbrace{[q-1][q-1]...[q-1]}_{n_1-1}\overline{c_{n_1}}\underbrace{[q-1][q-1]...[q-1]}_{n_2-1}\overline{c_{n_2}}...\underbrace{[q-1][q-1]...[q-1]}_{c_t-1}\overline{c_{n_t}}[q-1][q-1][q-1]...}\\
&-\Delta^{P} _{\underbrace{00...0}_{n_1-1}\overline{c_{n_11}}\underbrace{00...0}_{n_2-1}\overline{c_{n_2}}...\underbrace{00...0}_{c_t-1}\overline{c_{n_t}}000...}
\end{align*}

Whence,
$$
0<\sup\mathbb S_{P, (c_{n_t})}-\inf\mathbb S_{P, (c_{n_t})}<1
$$
and 
\begin{equation}
\label{eq: increment}
\mu_G \left(\mathbb S_{P, (c_{n_t})}\right)=\mu_G \left(\left[\inf\mathbb S_{P, (c_{n_t})}, \sup\mathbb S_{P, (c_{n_t})}\right]\right)=\prod^{t} _{j=1}{r_{c_{n_j}}}.
\end{equation}

One can note that the function $G$ is a monotonic increasing generalization of the Salem function defined in terms of the $P$-representation whenever $(n_k)=(k)$ (i.e., $(\bar n_k)=const=1$) and all $r_j (j=\overline{0,q-1})$ are positive or non-negative numbers.

So, one can formulate the following statements.
\begin{theorem}
The function $G$ has the following properties:
\begin{enumerate}
\item If $r_j\ge 0$ or $r_j>0$ for all $j=\overline{0,q-1}$, then:
\begin{itemize}
\item $G$ does not have intervals of monotonicity on $[0,1]$ whenever the condition $n_k=k$ holds for no  more than a finite number of values of $k$; 
\item $G$ has at least one interval of monotonicity on $[0,1]$ whenever  the condition $n_k\ne k$ holds for  a finite number of values of $k$; 
\item $G$ is a monotonic non-decreasing function (in the case when $r_j\ge 0$ for all $j=\overline{0,q-1}$) or is a strictly increasing function (in the case when $r_j> 0$ for all $j=\overline{0,q-1}$) whenever the condition $n_k=k$ holds for  $k\in\mathbb N$.
\end{itemize}
\item If there exists   $r_j=0$, where $j=\overline{0,q-1}$, then $G$ is  constant almost everywhere on $[0,1]$.
\item If there exists  $r_j<0$ (other $r_j$ are positive), where $j=\overline{0,q-1}$, and the condition $n_k=k$ holds for  almost all $k\in\mathbb N$, then $G$ does not have intervals of monotonicity on $[0,1]$.
\end{enumerate}
\end{theorem}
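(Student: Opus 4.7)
The plan is to split according to the sub-case, with the recurring tool being the cylinder decomposition
$$
G(s)\big|_{\Delta^{P}_{c_1\ldots c_M}}=A_0+B_0\cdot H(\sigma^{M}(s)),
$$
valid whenever $M$ is larger than the last ``permuted'' index, with $A_0,B_0$ constants on the cylinder and $H$ the classical monotone generalized Salem function of \eqref{eq: f2} (with $R$ in place of $P$). The increment formula \eqref{eq: increment} governs the sign and magnitude of the $G$-increment on every set $\mathbb{S}_{P,(c_{n_t})}$.

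For the third item of (1), $n_k=k$ makes $G$ literally of the form \eqref{eq: f2} with $R$; a digit-by-digit comparison using $\gamma_{j+1}-\gamma_j=r_j\ge 0$ yields monotonicity, strict when $r_j>0$. For the second item, fix $N$ with $n_k=k$ for $k>N$; on every cylinder of rank $\ge N$, $A_0,B_0$ are constants and $H$ is the function of the third item, giving monotonicity on the cylinder. The heart of the theorem is the first item of (1). The key combinatorial observation is that a permutation of $\mathbb{N}$ which is not eventually the identity has infinitely many \emph{descents} $k_0$ with $n_{k_0}>n_{k_0+1}$: if all descents lay below some $K$ then $(n_k)_{k>K}$ would be strictly increasing, and a brief counting argument on the cofinite image would force $n_k=k$ eventually. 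Since only $2m$ descents can involve an index $\le m$, infinitely many descents have both $n_{k_0},n_{k_0+1}>m$. Given any interval $I$ and a cylinder $\Delta^{P}_{c_1\ldots c_m}\subset I$ with such a descent $k_0$, I build three points $s_A,s_B,s_C$ in the cylinder agreeing on all digits except at positions $n_{k_0}$ and $n_{k_0+1}$, with $(i_{n_{k_0}},i_{n_{k_0+1}})$ equal to $(0,0),(0,q-1),(q-1,0)$ respectively (other free digits set to $0$). The $s$-ordering is $s_A<s_C<s_B$, because $n_{k_0+1}<n_{k_0}$ makes position $n_{k_0+1}$ more significant in $s$. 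Consecutiveness of $k_0,k_0+1$ collapses the intermediate sum and product in the difference formulas; a routine telescoping then gives
$$
G(s_B)-G(s_A)=r_0\Pi_0\bigl[\gamma_{q-1}+(r_{q-1}-r_0)V\bigr]>0,\qquad G(s_C)-G(s_B)=\Pi_0\gamma_{q-1}(1-r_0)>0,
$$
where $\Pi_0=\prod_{t<k_0}r_{i_{n_t}}>0$ and $V\in[0,1]$ (the bound on the bracket uses $(1-r_{q-1})\wedge(1-r_0)>0$). So $G(s_A)<G(s_B)<G(s_C)$, incompatible with the $s$-ordering; hence $G$ is not monotone on $I$.

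For (2), if $r_{j_0}=0$ then the digit $j_0$ appears in $(i_k(s))$ for a.e.\ $s$ because $p_{j_0}>0$; let $k^*(s)$ be the first $k$ with $i_{n_{k^*}}(s)=j_0$. For $k>k^*$ every summand of $G(s)$ carries the factor $r_{j_0}=0$, so $G(s)$ depends only on $i_{n_1}(s),\ldots,i_{n_{k^*}}(s)$ and is therefore constant on the cylinder of rank $M=\max(n_1,\ldots,n_{k^*})$ containing $s$; in particular $G'=0$ a.e. For (3), reapply the (1b) decomposition: on any cylinder of rank $\ge N$ one has $G=A_0+B_0\cdot H\circ\sigma^N$ with $B_0\ne 0$ (no $r_j$ vanishes), so monotonicity of $G$ reduces, up to a nonzero affine factor, to monotonicity of $H$ on a sub-cylinder, and affine self-similarity of $H$ further reduces this to monotonicity of $H$ on $[0,1]$. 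The latter fails because $r_{j_0}<0$: for $j_0<q-1$, compare $H(\Delta^{P}_{j_0 00\ldots})=\gamma_{j_0}$ with $H(\Delta^{P}_{(j_0+1)00\ldots})=\gamma_{j_0}+r_{j_0}<\gamma_{j_0}$ and with $H(0)=0$; for $j_0=q-1$, use $H(\Delta^{P}_{(q-1)00\ldots})=1-r_{q-1}>1>H(\Delta^{P}_{(q-1)(q-1)\ldots})=1$. In each case three ordered arguments with non-monotone $H$-values are produced.

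The expected main obstacle is the telescoping in the first item of (1): one has to verify that, with \emph{consecutive} indices $k_0,k_0+1$, the contributions at positions beyond $k_0+1$ cancel between $s_B$ and $s_C$ because the product $r_{i_{n_{k_0}}}r_{i_{n_{k_0+1}}}$ is the unordered pair $\{r_0,r_{q-1}\}$ in both cases. This is precisely what produces the clean formula $G(s_C)-G(s_B)=\Pi_0\gamma_{q-1}(1-r_0)$; if one settled for a non-consecutive inversion, an intermediate sum $Y$ would appear and could in principle flip the sign. The supporting combinatorial lemma --- ``not eventually identity implies infinitely many consecutive descents past any prescribed $m$'' --- is the backbone of the whole argument.
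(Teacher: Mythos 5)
Your proposal is essentially correct, but it follows a genuinely different (and far more explicit) route than the paper. The paper's entire justification is the one-line remark that the statements ``follow from'' the increment formula \eqref{eq: increment}, i.e.\ from the fact that the $G$-increment over a set $\mathbb S_{P,(c_{n_t})}$ equals $\prod_{j}r_{c_{n_j}}$, together with the observation that for $(n_k)=(k)$ the function reduces to a monotone Salem-type function; no argument is given for why a permutation that is not (eventually) the identity destroys all intervals of monotonicity. You supply the missing mechanism: the combinatorial lemma that a permutation of $\mathbb N$ which is not eventually the identity has infinitely many consecutive descents $n_{k_0}>n_{k_0+1}$ with both values above any prescribed $m$ (your counting argument for this is sound), the explicit triple $s_A<s_C<s_B$ inside a cylinder contained in a given interval, and the clean identities $G(s_B)-G(s_A)=r_0\Pi_0\bigl[\gamma_{q-1}+(r_{q-1}-r_0)V\bigr]$ and $G(s_C)-G(s_B)=\Pi_0\gamma_{q-1}(1-r_0)$, which I checked and which do exploit the consecutiveness of $k_0,k_0+1$ exactly as you say; this even proves a slightly stronger form of item (1a) (non-monotonicity on every interval whenever $(n_k)$ is not eventually the identity and all $r_j>0$). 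Your cylinder decomposition $G=A_0+B_0\,H\circ\sigma^{M}$ for the eventually-identity cases (1b), (3), and the ``locally constant at a.e.\ point'' reading of item (2), are likewise correct and are more than the paper offers. Minor caveats, none fatal: in (1a) your strict inequalities need $\Pi_0>0$ and $r_0>0$, so the ``$r_j\ge 0$ with some $r_j=0$'' branch of the theorem is not covered by this computation (that branch is anyway degenerate and sits awkwardly with item (2) as the paper states it); in item (3) your triple degenerates when $j_0=0$ (take $H(0)=0$, $H(\Delta^{P}_{100\ldots})=r_0<0$, $H(1)=1$ instead); and since your three test points are $P$-rational you should either fix the canonical representation or give them a common non-terminating tail, which changes nothing in the formulas since $V\in[0,1]$ is arbitrary there.
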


Let us note that  the last statements follow from~\eqref{eq: increment}.

Let us consider a cylinder $\Delta^P _{c_1c_2...c_n}$. We obtain 
$\mu_G\left(\Delta^P _{c_1c_2...c_n}\right)=$
$$
=\Delta^{G(s)} _{\underbrace{[q-1][q-1]...[q-1]}_{e_1}\overline{c_{1}}\underbrace{[q-1][q-1]...[q-1]}_{e_2}\overline{c_{2}}...\underbrace{[q-1][q-1]...[q-1]}_{e_n}\overline{c_{n}}[q-1][q-1][q-1]...}
$$
$$
-\Delta^{G(s)} _{\underbrace{00...0}_{e_1}\overline{c_{1}}\underbrace{00...0}_{e_2}\overline{c_{2}}...\underbrace{00...0}_{e_n}\overline{c_{n}}000...},
$$
where $\overline{c_{j}}\in\{c_1,c_2,\dots , c_n\}$, $j=\overline{1,n}$, and $(e_n)$ is a certain sequence of numbers from $\mathbb N \cup\{0\}$.

So, differential properties of $G$ depend on a sequence $(n_k)$ and the set of numbers $R=\{r_0, r_1,\dots , r_{q-1}\}$.
\begin{statement*}
The function $G$ can be a singular or non-differentiable function. It  depends on the sequence $(n_k)$ and $R=(r_0,r_1,\dots , r_{q-1})$.
\end{statement*}

In addition, let us note the following.
\begin{lemma}
Let $\eta$ be a random variable  defined by the following form 
$$
\eta=  \Delta^{P} _{\xi_{1}\xi_{2}...\xi_{k}...},
$$
where
$\xi_k=i_{n_k}$, $k=1,2,3,\dots $, and the digits $\xi_{k}$ are  random and take the values $0,1,\dots ,q-1$ with probabilities ${r}_{0}, {r}_{1}, \dots , {r}_{q-1}$.
That is,  $\xi_n$ are independent and $P\{\xi_{k}=i_{n_k}\}=r_{i_{n_k}}$, $i_{n_k}\in\{0,1,\dots q-1\}$. 
 Here $(n_k)$ is a sequence of positive integers such that $n_i\ne n_j$ for $i\ne j$ and such that  for any $n\in\mathbb N$ there exists a number $k_0$ for which the condition $n_{k_0}=n$ holds.

The distribution function ${F}_{\eta}$ of the random variable $\eta$ can be
represented by
$$
{F}_{\eta}(s)=\begin{cases}
0,&\text{ $s< 0$}\\
\gamma_{i_{n_1}(s)}+\sum^{\infty} _{k=2} {\left({\gamma}_{i_{n_k}(s)} \prod^{k-1} _{u=1} {{r}_{i_{n_u}(s)}}\right)},&\text{ $0 \le s<1$}\\
1,&\text{ $s\ge 1$,}
\end{cases}
$$
where $s=\Delta^{P} _{i_{n_1}i_{n_2}...i_{n_k}...}$.
\end{lemma}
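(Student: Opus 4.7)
The plan is to adapt the proof of the first lemma of the paper almost verbatim: the constant vector $R=(r_0,\ldots,r_{q-1})$ here plays the role that the sequence $(P_k)$ played there, and the permutation $(n_k)$ enters only through the labeling of the $P$-digits of $s$.

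The trivial cases $s<0$ and $s\ge 1$ follow from the almost-sure inclusion $\eta\in[0,1]$, so only the range $0\le s<1$ needs work. Given such an $s$, I fix the $P$-representation $s=\Delta^P_{i_{n_1}i_{n_2}\ldots i_{n_k}\ldots}$ and decompose
$$\{\eta<s\}=\bigsqcup_{k=1}^{\infty}\bigl\{\xi_1=i_{n_1},\ldots,\xi_{k-1}=i_{n_{k-1}},\ \xi_k<i_{n_k}\bigr\}$$
according to the first index at which the random $P$-digit $\xi_k$ of $\eta$ strictly undershoots the corresponding digit of $s$. By independence of the $\xi_j$ and the marginal law $P\{\xi_j=a\}=r_a$, the probability of the $k$-th atom equals $\gamma_{i_{n_k}}\prod_{j=1}^{k-1}r_{i_{n_j}}$, and summing the resulting series produces the formula claimed for $F_\eta(s)$.

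The only subtlety, which I expect to be the mildest obstacle, is the non-uniqueness of the $P$-representation at $P$-rational points. This is handled by noting that $P\{\eta=s\}=0$ for every $s\in[0,1]$: indeed $\prod_{j=1}^{\infty}r_{i_{n_j}}=0$ because each $r_a$ lies strictly in $(0,1)$ (as $R$ is a probability vector with all entries positive and $q\ge 2$). Consequently the two representations of a $P$-rational $s$ yield the same value of $F_\eta(s)$, so the decomposition above is unambiguous; monotonicity and right-continuity then fix $F_\eta$ uniquely.
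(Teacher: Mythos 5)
Your proposal is correct and follows essentially the route the paper intends: the paper itself gives no written proof for this lemma (it only points to the method in \cite{Symon2017}), but that method is exactly the distribution-function decomposition used in the paper's first lemma, which you adapt verbatim with $R$ in place of $(P_k)$. Your extra remark that $P\{\eta=s\}=0$ handles the $P$-rational ambiguity is a sound (and slightly more careful) addition, not a departure from the approach.
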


A method of  the corresponding proof is described in~\cite{Symon2017}.

Finally, let us describe integral properties.

\begin{theorem}
The Lebesgue integral of the function $G$ can be calculated by the
formula
$$
\int^1 _0 {G(s)ds}=\frac{\gamma_1p_1+\gamma_2p_2+\dots + \gamma_{q-1}p_{q-1}}{1-p_0r_0-p_1r_1 - \dots - p_{q-1}r_{q-1}}.
$$
\end{theorem}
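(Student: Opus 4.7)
The plan is to exploit the fact that, under Lebesgue measure on $[0,1]$, the digits $i_k(s)$ of the $P$-representation $s=\Delta^{P}_{i_1 i_2\ldots}$ form an i.i.d.\ sequence with $\mathbb{P}\{i_k(s)=j\}=p_j$. This is read off directly from the description of cylinders in Section~2: each $\Lambda^{P}_{c_1\ldots c_m}$ is a closed interval of Lebesgue length $\prod_{t=1}^{m}p_{c_t}$, which follows from the analog of \eqref{eq: operator3} for the $P$-representation, since as $\sigma^{m}(s)$ ranges over $[0,1]$ the value $s$ ranges over an interval of that length. Consequently, the joint distribution of $(i_{k_1},\ldots,i_{k_r})$ at any distinct positions factors as $p_{c_1}\cdots p_{c_r}$. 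Because $(n_k)$ is a bijection of $\mathbb{N}$, the re-indexed sequence $\bigl(i_{n_k}(s)\bigr)_{k\ge 1}$ is then i.i.d.\ with the same marginal.

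With independence in hand, I would integrate the series termwise. Writing $\rho=\max_{l}|r_l|<1$ and $M=\max_{l}|\gamma_l|$, the $k$-th summand of
\begin{equation*}
G(s)=\sum_{k=1}^{\infty}\gamma_{i_{n_k}(s)}\prod_{t=1}^{k-1}r_{i_{n_t}(s)}
\end{equation*}
is bounded by $M\rho^{k-1}$, so the partial sums are dominated by $M/(1-\rho)$ and dominated convergence applies. By independence,
\begin{equation*}
\int_{0}^{1}\gamma_{i_{n_k}(s)}\prod_{t=1}^{k-1}r_{i_{n_t}(s)}\,ds
=\Bigl(\sum_{j=0}^{q-1}\gamma_j p_j\Bigr)\Bigl(\sum_{j=0}^{q-1}p_j r_j\Bigr)^{k-1}.
\end{equation*}
Setting $A=\sum_{j=1}^{q-1}\gamma_j p_j$ (using $\gamma_0=0$) and $B=\sum_{j=0}^{q-1}p_j r_j$, and noting $|B|\le\sum_j p_j|r_j|<\sum_j p_j=1$, the geometric sum $\sum_{k\ge 1}AB^{k-1}=A/(1-B)$ yields the claimed formula.

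The main technical point is the product formula for the joint Lebesgue measure of the digit events. I would prove it by induction on the number of prescribed positions: at each step, decompose the event that the largest-indexed digit takes its prescribed value as a disjoint union of rank-$k$ cylinders over the free positions, and sum the cylinder lengths using $\sum_{j=0}^{q-1}p_j=1$. A pleasant consequence of this route is that it handles every admissible $(n_k)$ uniformly, avoiding the combinatorial bookkeeping that would be forced on us by a direct self-consistency recursion based on the system \eqref{eq: system-q}, where the successive operators $\sigma_{\bar n_k}$ act by different scalings depending on the deletion pattern and the partition of $[0,1]$ into their monotonicity intervals is itself $(n_k)$-dependent.
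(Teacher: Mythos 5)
Your argument is correct, but it follows a genuinely different route from the paper. The paper stays inside its functional-equation machinery: it splits $[0,1]$ into the rank-one cylinders $[\beta_j,\beta_{j+1}]$, applies \eqref{eq: system-q} on each piece, and uses the differential relation $d\left(\sigma_{\bar n_{k-1}}\circ\cdots\circ\sigma_{\bar n_1}(s)\right)=p_{i_{n_k}}d\left(\sigma_{\bar n_k}\circ\cdots\circ\sigma_{\bar n_1}(s)\right)$ (coming from \eqref{eq: generalized shift 2} and \eqref{eq: 2-composition}) to produce the self-consistency recursion $\int_0^1 G\,ds=a+b\int_0^1 G(\sigma_{\bar n_1}(s))\,d(\sigma_{\bar n_1}(s))$, iterated to $a+ab+ab^2+\cdots$ with the remainder killed by $|b|^k\to 0$ and the boundedness of $G$; you instead observe that Lebesgue measure makes the $P$-digits $i_k(s)$ i.i.d.\ with $\mathbb{P}\{i_k=j\}=p_j$ (via the cylinder-length computation from the $P$-analog of \eqref{eq: operator3}), and integrate the series for $G$ termwise, each term giving $AB^{k-1}$ by independence of the distinct positions $n_1,\dots,n_k$. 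Both roads end in the same geometric series $A/(1-B)$. Your version buys a cleaner justification of the limiting steps (explicit domination by $M\rho^{k-1}$ with $\rho=\max_l|r_l|<1$), sidesteps the branch-by-branch change-of-variables bookkeeping for the piecewise-linear operators $\sigma_{\bar n_k}$ that the paper treats rather informally, and in fact uses only the injectivity of $(n_k)$ (distinctness of positions), not its surjectivity; it also handles signed $r_j\in(-1,1)$ without extra care. What it costs is the product lemma for digit events at arbitrary positions, which you correctly reduce to summing cylinder lengths over the free coordinates using $\sum_j p_j=1$; that is routine, and you should also note in passing that the digit functions are only two-valued on the countable (hence null) set of $P$-rational points, so the termwise identification of $G(s)$ with the digit series holds almost everywhere, which suffices for the integral.
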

\begin{proof}

Since $\gamma_0=0$,  let us denote the sum $\sum^{q-1} _{j=1}{\gamma_jp_j}$  by $a$ and let us denote the sum $\sum^{q-1} _{t=0}{p_tr_t}$ by $b$. Since equality~\eqref{eq: generalized shift 2} holds, we obtain
$$
s=p_{i_m}\sigma_m(s)+{(1-p_{i_m})}\Delta^P _{i_1i_2...i_{m-1}000...}+\beta_{i_m}\prod^{m-1} _{j=1}{p_{i_j}}
$$
and
$$
ds=p_{i_m}d(\sigma_m(s)).
$$

In the general case, for arbitrary positive integers $n_1$ and $n_2$, using equality \eqref{eq: 2-composition}, we have 
$$
p_{i_{n_2}}d\left(\sigma_{n_{2}}\circ \sigma_{n_1}(s)\right)=d\left( \sigma_{n_1}(s)\right).
$$
Also, for any posirive integer $k$, the following is true:
$$
d\left(\sigma_{n_{k-1}}\circ \dots \circ \sigma_{n_2}\circ \sigma_{n_1}(s)\right)=p_{i_{n_k}}d\left(\sigma_{n_k}\circ \sigma_{n_{k-1}}\circ \dots \circ \sigma_{n_2}\circ \sigma_{n_1}(s)\right),
$$
where $\sigma_0(s)=s$.

So, we have
\begin{align*}
\int^1 _0 {G(s)ds}&=\sum^{q-1} _{j=0}{\int^{\beta_{j+1}} _{\beta_j} {G(s)ds}}=\sum^{q-1} _{j=0}{\int^{\beta_{j+1}} _{\beta_{j}} {\left(\gamma_j+r_jG(\sigma_{\bar n_1}(s))\right)ds}}\\
&=\sum^{q-1} _{j=1}{\gamma_jp_j}+\sum^{q-1} _{t=0}{p_tr_t}\int^1 _0 {G(\sigma_{\bar n_1}(s))d(\sigma_{\bar n_1}(s))}\\
&=\sum^{q-1} _{j=1}{\gamma_jp_j}+\sum^{q-1} _{t=0}{p_tr_t}\left(\sum^{q-1} _{j=0}{\int^{\beta_{j+1}} _{\beta_{j}} {\left(\gamma_j+r_jG(\sigma_{\bar n_2}\circ \sigma_{\bar n_1}(s))\right)d(\sigma_{\bar n_1}(s))}}\right)\\
&=a+b\left(a+b\int^1 _0 {G(\sigma_{\bar n_2}\circ \sigma_{\bar n_1}(s)))d(\sigma_{\bar n_2}\circ \sigma_{\bar n_1}(s))}\right)\\
&=a+ab+b^2\left(\sum^{q-1} _{j=0}{\int^{\beta_{j+1}} _{\beta_{j+1}} {\left(\gamma_j+r_jG(\sigma_{\bar n_3}\circ\sigma_{\bar n_2}\circ \sigma_{\bar n_1}(s))\right)d(\sigma_{\bar n_2}\circ\sigma_{\bar n_1}(s))}}\right)\\
&=a+ab+b^2\left(a+b\int^1 _0 {G(\sigma_{\bar n_3}\circ\sigma_{\bar n_2}\circ \sigma_{\bar n_1}(s)))d(\sigma_{\bar n_3}\circ\sigma_{\bar n_2}\circ \sigma_{\bar n_1}(s))}\right)\\
&=a+ab+ab^2+b^3\left(a+b\int^1 _0 {g(\sigma_{\bar n_4}\circ\sigma_{\bar n_3}\circ\sigma_{\bar n_2}\circ \sigma_{\bar n_1}(s)))d(\sigma_{\bar n_4}\circ\sigma_{\bar n_3}\circ\sigma_{\bar n_2}\circ \sigma_{\bar n_1}(s))}\right)=\dots \\
\dots &= a+ab+\dots +ab^{k-1}\\
&+b^k\left(a+b\int^1 _0 {G(\sigma_{\bar n_{k+1}}\circ\sigma_{\bar n_k}\circ\ldots \circ \sigma_{\bar n_1}(s)))d(\sigma_{\bar n_{k+1}}\circ\sigma_{\bar n_k}\circ \ldots \circ \sigma_{\bar n_1}(s))}\right).
\end{align*}

Finally, we obtain
\begin{align*}
\int^1 _0{G(s)ds}&=\lim_{k\to\infty}{\left(\sum^{k} _{t=0}{ab^t}+b^{k+1}\int^1 _0 {G(\sigma_{\bar n_{k+1}}\circ\sigma_{\bar n_k}\circ\ldots \circ \sigma_{\bar n_1}(s)))d(\sigma_{\bar n_{k+1}}\circ\sigma_{\bar n_k}\circ \ldots \circ \sigma_{\bar n_1}(s))}\right)}\\
&=\sum^{\infty} _{k=1}{ab^{k-1}}=\frac{a}{1-b}=\frac{\gamma_1p_1+\gamma_2p_2+\dots + \gamma_{q-1}p_{q-1}}{1-p_0r_0-p_1r_1 - \dots - p_{q-1}r_{q-1}}.
\end{align*}
\end{proof}

{\section*{Statements and Declarations}}

\subsection*{Competing Interests}

\emph{The author states that there is no conflict of interest}

\subsection*{Information regarding sources of funding}

\emph{No funding was received}

\subsection*{Data availability statement}

\emph{The manuscript has no  associated data}

\subsection*{Study-specific approval by the appropriate ethics committee for research involving humans and/or animals, informed consent if the research involved human participants, and a statement on welfare of animals if the research involved animals (as appropriate)}

\emph{There are not suitable  for this research}

\end{document}